\documentclass{article}

\usepackage{amsmath,amssymb,amsfonts,amsthm,bm,mathrsfs,color,natbib}
\usepackage[hyphens]{url}
\usepackage[colorlinks,citecolor=blue,urlcolor=blue]{hyperref}

\title{The relationship between the transition functions of the labeled and unlabeled versions of the infinitely-many-neutral-alleles diffusion model}
\author{S. N. Ethier\thanks{\,Department of Mathematics, University of Utah. Email: \href{mailto:ethier@math.utah.edu}{ethier@math.utah.edu}.}}
\date{}

\theoremstyle{plain}
\newtheorem{theorem}{Theorem}
\newtheorem{lemma}[theorem]{Lemma}

\theoremstyle{remark}
\newtheorem*{remark}{Remark} % remarks unnumbered 

\allowdisplaybreaks

\begin{document}
\maketitle

\begin{abstract}
The transition function of the unlabeled infinitely-many-neutral-alleles diffusion model, as expressed by \cite{Z15}, is derived from the transition function of the labeled infinitely-many-neutral-alleles diffusion model, slightly simplifying the derivation by \cite{F10}.  
\end{abstract}

\section{Introduction}

Given $\theta>0$, let $\{D_t,\,t\ge0\}$ be the pure death process in $\textbf{Z}_+\cup\{\infty\}$ starting at $\infty$ with death rates
\begin{equation*}
\lambda_n:=\textstyle{\frac12}n(n-1+\theta), \quad n\ge0,
\end{equation*}
($\infty$ is an entrance boundary) and define
\begin{equation*}
d_n^\theta(t):=\textbf{P}(D_t=n),\quad n\ge0,\,t>0.  
\end{equation*}
Explicit but complicated formulas are available for these functions (\citeauthor{T84}, \citeyear{T84}); they will not be needed here.

The \textit{unlabeled} infinitely-many-neutral-alleles diffusion model was characterized by \cite{EK81} as a diffusion process in the Kingman simplex
\begin{equation*}
\overline{\nabla}_\infty:=\bigg\{\bm p=(p_1,p_2,\ldots)\in[0,1]^\infty:p_1\ge p_2\ge\cdots\ge0,\,\sum_{i=1}^\infty p_i\le1\bigg\}.
\end{equation*}
with a specified generator depending on $\theta$.  Its transition function was shown by \cite{Z15} to have the form
\begin{equation}\label{Q}
Q(t,\bm p,\cdot)=\sum_{n=0}^\infty d_n^\theta(t)\,\Lambda_n^\theta(\bm p,\cdot),
\end{equation}
where $\Lambda_n^\theta(\bm p,\cdot)$ is a certain one-step transition function on $\overline{\nabla}_\infty$ for each $n\ge0$.

The \textit{labeled} infinitely-many-neutral-alleles diffusion model is a Fleming--Viot process in $\mathscr{P}(S)$, the space of Borel probability measures on a compact metric space $S$, with mutation operator $A$ on $C(S)$ given by
\begin{equation*}
(Af)(x):={\textstyle{\frac12}}\theta\int_S(f(\xi)-f(x))\,\nu_0(d\xi),
\end{equation*}
where $\nu_0\in\mathscr{P}(S)$ is assumed to be nonatomic.  Its transition function was shown by \cite{EG93} to have the form
\begin{equation}\label{P}
P(t,\mu,\cdot)=\sum_{n=0}^\infty d_n^\theta(t)\,\Gamma_n^{\theta,\nu_0}(\mu,\cdot),
\end{equation}
where $\Gamma_n^{\theta,\nu_0}(\mu,\cdot)$ is a certain one-step transition function on $\mathscr{P}(S)$ for each $n\ge0$.

Let the mapping $\Phi:\mathscr{P}(S)\mapsto\overline{\nabla}_\infty$ be given by
\begin{equation*}
\Phi(\mu):=(p_{(1)},p_{(2)},\ldots),
\end{equation*}
where $p_{(1)},p_{(2)},\ldots$ are the descending order statistics of the sizes (or masses) of the atoms of $\mu$.  We would like to apply the Markov mapping theorem of \cite{D65} (see the next section) to conclude that, for each $t>0$ and $\bm p\in\overline{\nabla}_\infty$,
\begin{equation}\label{Dynkin}
Q(t,\bm p,\cdot)=P(t,\mu,\Phi^{-1}(\cdot))\text{ whenever }\mu\in\Phi^{-1}(\bm p), 
\end{equation}
thereby deriving $Q(t,\bm p,\cdot)$ from $P(t,\mu,\cdot)$.  In fact, this has already been done by \cite{F10}, but we want to give a more direct derivation by showing that, for each $n\ge0$ and $\bm p\in\overline{\nabla}_\infty$,
\begin{equation*}
\Lambda_n^\theta(\bm p,\cdot)=\Gamma_n^{\theta,\nu_0}(\mu,\Phi^{-1}(\cdot))\text{ whenever }\mu\in\Phi^{-1}(\bm p).
\end{equation*}
\citeauthor{F10}'s (\citeyear{F10}) derivation was based on \citeauthor{E92}'s (\citeyear{E92}) eigenfunction expansion for $Q(t,\bm p,\cdot)$ instead of \citeauthor{Z15}'s (\citeyear{Z15}) more elegant formula \eqref{Q} (and \eqref{Lambda} below).  In particular, Feng's derivation relied on the explicit formulas for the functions $d_n^\theta(t)$ and ours does not.  \citeauthor{Z15}'s (\citeyear{Z15}) proof of \eqref{Q} also relied on these formulas, but \citeauthor{EG93}' (\citeyear{EG93}) proof of \eqref{P} (and \eqref{Gamma} below) used instead the fact that the functions $d_n^\theta(t)$ satisfy the Kolmogorov forward equations.

There is an alternative version of the Markov mapping theorem due to \cite{RP81} (see the next section), which can also be used to prove \eqref{Dynkin}.

Let us specify $\Lambda_n^\theta(\bm p,\cdot)$ and $\Gamma_n^{\theta,\nu_0}(\mu,\cdot)$.

First, Kingman's Poisson--Dirichlet distribution with parameter $\theta>0$, denoted by $\text{PD}_\theta$, is the unique stationary distribution for the transition function $Q(t,\bm p,\cdot)$, and it is concentrated on
\begin{equation*}
\nabla_\infty:=\bigg\{\bm p=(p_1,p_2,\ldots)\in[0,1]^\infty:p_1\ge p_2\ge\cdots\ge0,\,\sum_{i=1}^\infty p_i=1\bigg\},
\end{equation*}  
a dense subset of $\overline{\nabla}_\infty$ as the notation suggests.

Define the functions $P_{\bm n}\in C(\overline{\nabla}_\infty)$ for $\bm n=(n_1,\ldots,n_l)$ with $n_1\ge\cdots\ge n_l\ge1$ and $l\in\textbf{N}$ by
\begin{equation}\label{sampling}
P_{\bm n}(\bm p):=\binom{n}{n_1,\ldots,n_l}\frac{1}{\alpha_1!\cdots \alpha_n!}\sum_{i_1,\ldots,i_l\in\textbf{N}\text{ distinct}} p_{i_1}^{n_1}\cdots p_{i_l}^{n_l},\quad \bm p\in\nabla_\infty,
\end{equation}
where $n=|\bm n|:=n_1+\cdots+n_l$ and $\alpha_j=|\{i:n_i=j\}|$ for $j=1,\ldots,n$. These functions, defined on $\nabla_\infty$, extend uniquely to $C(\overline{\nabla}_\infty)$.  Observe that \eqref{sampling} is the probability that a random sample of size $n$ from a population with allele frequencies $p_1,p_2,\ldots$ contains $l$ alleles, one with $n_1$ representatives, another with $n_2$ representatives, and so on.

We can now define 
\begin{equation}\label{Lambda}
\Lambda_n^\theta(\bm p,d\bm q):=\begin{cases}{\rm PD}_\theta(d\bm q)&\text{if $n=0$ or 1}\\ \noalign{\medskip}
\displaystyle{\sum_{\bm n:|\bm n|=n}\frac{P_{\bm n}(\bm p)P_{\bm n}(\bm q)}{\int_{\overline{\nabla}_\infty}P_{\bm n}\,d{\rm PD}_\theta}\,{\rm PD}_\theta(d\bm q)}&\text{if $n\ge2$}\end{cases}
\end{equation}
for $\bm p\in \overline{\nabla}_\infty$.  The integral in the denominator is given by the Ewens sampling formula, namely
\begin{equation}\label{ESF}
\int_{\overline{\nabla}_\infty}P_{\bm n}\,d\text{PD}_\theta=\frac{n!}{n_1\cdots n_l}\,\frac{1}{\alpha_1!\cdots\alpha_n!}\,\frac{\theta^l}{\theta_{(n)}},
\end{equation}
where $a_{(n)}:=a(a+1)\cdots(a+n-1)$.

For $\beta>0$ and $\nu\in\mathscr{P}(S)$, define $\Pi_{\beta,\nu}\in\mathscr{P}(\mathscr{P}(S))$ by
\begin{equation*}
\Pi_{\beta,\nu}(\cdot):=\textbf{P}\bigg(\sum_{i=1}^\infty \rho_i\,\delta_{\xi_i}\in\cdot\bigg),
\end{equation*}
where $(\rho_1,\rho_2,\ldots)$ is $\text{PD}_\beta$ and independent of $\xi_1,\xi_2,\ldots$, which are i.i.d.\ $\nu$.  Here $\delta_x\in\mathscr{P}(S)$ is the unit mass at $x\in S$.  The unique stationary distribution of $P(t,\mu,\cdot)$ is $\Pi_{\theta,\nu_0}$, and we can now define
\begin{equation}\label{Gamma}
\Gamma_n^{\theta,\nu_0}(\mu,\cdot):=\begin{cases}\Pi_{\theta,\nu_0}(\cdot)&\text{if $n=0$}\\ \noalign{\medskip}
\displaystyle{\int_{S^n}\mu^n(dx_1\times\cdots\times dx_n)}&\\ \noalign{\vglue-1mm}
\qquad\quad{}\cdot\Pi_{n+\theta,(n+\theta)^{-1}(\delta_{x_1}+\cdots+\delta_{x_n}+\theta\nu_0)}(\cdot)&\text{if $n\ge1$}\end{cases}
\end{equation}
for $\mu\in\mathscr{P}(S)$, where $\mu^n\in\mathscr{P}(S^n)$ is the $n$-fold product measure $\mu\times\cdots\times\mu$.

Let us state formally the result we aim to establish.

\begin{theorem}\label{one}
For each $n\ge0$ and $\bm p\in\overline{\nabla}_\infty$,
\begin{equation}\label{Dynkin-n}
\Lambda_n^\theta(\bm p,\cdot)=\Gamma_n^{\theta,\nu_0}(\mu,\Phi^{-1}(\cdot))\text{ whenever }\mu\in\Phi^{-1}(\bm p).
\end{equation}
\end{theorem}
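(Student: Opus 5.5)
The plan is to compare the two sides through their integrals against the functions $P_{\bm m}$ rather than through any explicit form of $\Phi^{-1}$. The first step is a reduction to integrals of sampling functions. The linear span of the functions $P_{\bm m}$, together with the constant $1$, is an algebra that separates points of $\overline{\nabla}_\infty$. This is because $\sum_i p_i^k$ for $k\ge2$ lies in the span, and these power sums determine $\bm p$. By Stone--Weierstrass the span is then dense in $C(\overline{\nabla}_\infty)$. It therefore suffices to show that, for every $\bm m$,
\begin{equation*}
\int P_{\bm m}(\bm q)\,\Lambda_n^\theta(\bm p,d\bm q)=\int P_{\bm m}(\Phi(\nu))\,\Gamma_n^{\theta,\nu_0}(\mu,d\nu),\qquad \mu\in\Phi^{-1}(\bm p).
\end{equation*}
The right side is a sampling probability. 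It is the chance that $m=|\bm m|$ draws from $\nu$ exhibit the partition $\bm m$, averaged over $\nu$.

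Cases $n=0$ and $n=1$ are handled directly. For $n=0$, $\Pi_{\theta,\nu_0}\circ\Phi^{-1}={\rm PD}_\theta$, since $\nu_0$ is nonatomic and so the $\xi_i$ are a.s.\ distinct. For $n=1$, I will use the known fact that $\Pi_{1+\theta,(1+\theta)^{-1}(\delta_x+\theta\nu_0)}=\Pi_{\theta,\nu_0}$; this is the stick-breaking/Dirichlet-process description in which the atom at $x$ is just one more size-biased atom. Hence $\Gamma_1\circ\Phi^{-1}={\rm PD}_\theta=\Lambda_1$.

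For $n\ge2$, I will describe $\Gamma_n$ as a Pólya urn. Draw $x_1,\dots,x_n$ i.i.d.\ $\mu$, and let $\bm n$ be the partition of $n$ they induce, with distinct values $y_1,\dots,y_l$ of multiplicities $n_1,\dots,n_l$. These $y_j$ are atoms of $\mu$ a.s.; any nonatomic part of $\mu$ produces singletons, and this is consistent with the extension of $P_{\bm n}$ to $\overline{\nabla}_\infty$. By construction $\mathbf{P}(\text{partition }\bm n)=P_{\bm n}(\bm p)$. Conditionally, $\Pi_{n+\theta,(n+\theta)^{-1}(\sum_j n_j\delta_{y_j}+\theta\nu_0)}$ is the posterior of a Dirichlet process $\Pi_{\theta,\nu_0}$ given $n$ observations forming the partition $\bm n$. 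Its image under $\Phi$ therefore depends only on $\bm n$, because $\nu_0$ is nonatomic. By Bayes' formula with respect to the prior ${\rm PD}_\theta$, this image equals
\begin{equation*}
\frac{P_{\bm n}(\bm q)\,{\rm PD}_\theta(d\bm q)}{\int P_{\bm n}\,d{\rm PD}_\theta}.
\end{equation*}
Summing over $\bm n$ with weights $P_{\bm n}(\bm p)$ then gives exactly \eqref{Lambda}, which proves \eqref{Dynkin-n}.

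The main obstacle is the Bayes identification. I will verify it with the test functions from the first step. The $\bm m$-moment of the posterior image equals the probability that $m$ further draws show partition $\bm m$, given that the first $n$ draws showed partition $\bm n$. By exchangeability of the Blackwell--MacQueen urn this conditional probability equals
\begin{equation*}
\frac{\int P_{\bm n}P_{\bm m}\,d{\rm PD}_\theta}{\int P_{\bm n}\,d{\rm PD}_\theta},
\end{equation*}
which is what the claimed density gives. Some care is needed with the combinatorial constants in \eqref{sampling}. These constants cancel between numerator and denominator, so only the unnormalized unordered-sample probabilities matter, and the Ewens sampling formula \eqref{ESF} is needed only to confirm that the normalization is consistent.
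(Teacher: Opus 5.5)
Your proof is correct, and for $n\ge2$ it takes a genuinely different route from the paper's. The reductions are the same in both. Each tests against the separating class $\{P_{\bm m}\}$; your Stone--Weierstrass step reproves the density lemma the paper cites, and closure of the span under products follows from the decomposition in the proof of Lemma~\ref{two}. Each also splits $\Gamma_n^{\theta,\nu_0}(\mu,\Phi^{-1}(\cdot))$ by the type $\bm n$ of an $n$-sample from $\mu$, with weights $P_{\bm n}(\bm p)$. The pieces are the $\Phi$-images $K_{\bm n}$ of $\Pi_{n+\theta,(n+\theta)^{-1}(\sum_j n_j\delta_{y_j}+\theta\nu_0)}$, which depend only on $\bm n$.

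Everything then rests on \eqref{m-n-equation}, i.e.\ $\int P_{\bm m}\,dK_{\bm n}=\int P_{\bm m}P_{\bm n}\,d\mathrm{PD}_\theta\big/\int P_{\bm n}\,d\mathrm{PD}_\theta$. The paper proves this by computation. Using \eqref{rv-rep}, it writes the left side as a beta/Dirichlet/$\mathrm{PD}_\theta$ moment, and it evaluates both sides in closed form via the Ewens sampling formula (Lemmas~\ref{two} and~\ref{three}).

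You instead recognize both sides as the same predictive probability in the Blackwell--MacQueen urn. Draws from $\Pi_{n+\theta,\ldots}$ follow the urn started from $\theta\nu_0+\sum_j n_j\delta_{y_j}$, which is the continuation of the urn started from $\theta\nu_0$ after $n$ draws of type $\bm n$. De Finetti's representation (draws i.i.d.\ given $\mathcal M\sim\Pi_{\theta,\nu_0}$, with $\Phi(\mathcal M)\sim\mathrm{PD}_\theta$) then turns that conditional probability into the ratio.

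Your route needs no explicit formulas and no ESF beyond $\int P_{\bm n}\,d\mathrm{PD}_\theta>0$, and it explains why \eqref{m-n-equation} holds. The cost is importing two outside results: Ferguson's identification of $\Pi_{\beta,\nu}$ with the Dirichlet process with parameter $\beta\nu$ (needed for atomic $\nu$ as well), and the Blackwell--MacQueen theorem. The paper's computation is self-contained given \eqref{rv-rep} and \eqref{ESF}, and it yields Feng's explicit moment formula as a by-product.

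One slip: for $n=1$, the equality $\Pi_{1+\theta,(1+\theta)^{-1}(\delta_x+\theta\nu_0)}=\Pi_{\theta,\nu_0}$ is false on $\mathscr P(S)$. The former a.s.\ has an atom at $x$, and the latter a.s.\ does not, since $\nu_0$ is nonatomic. Only the $\Phi$-images agree (both are $\mathrm{PD}_\theta$), which is all you need. This is also just the case $\bm n=(1)$ of your general argument, because $P_{(1)}\equiv1$.
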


This immediately implies that, for each $t>0$ and $\bm p\in\overline{\nabla}_\infty$, \eqref{Dynkin} holds.

\section{The Markov mapping theorem}

There are two versions of the Markov mapping theorem that can be expressed in terms of transition functions, one due to \cite{D65} and another due to \cite{RP81}.

\begin{theorem}[\citeauthor{D65}, \citeyear{D65}, \S6.10]\label{Dynkin-Th}
Let $E$ and $E_0$ be complete separable metric spaces, and let $\Phi:E\mapsto E_0$ be Borel measurable and surjective.  Let $P(t,x,\cdot)$ be the transition function for a Markov process $X$ in $E$. If for each $t>0$ the condition 
\begin{equation}\label{Dynkin-cond}
P(t,x,\Phi^{-1}(\cdot))\text{ depends on }x\text{ only through }\Phi(x)
\end{equation}
holds, then $Y:=\Phi\circ X$ is a Markov process in $E_0$ with transition function
\begin{equation}\label{Q-def}
Q(t,y,\cdot):=P(t,x,\Phi^{-1}(\cdot)),\text{ where }x\in\Phi^{-1}(y).
\end{equation}
\end{theorem}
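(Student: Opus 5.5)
We must prove Dynkin's theorem as stated. Fix a Markov process $X$ in $E$ with respect to its natural filtration $\{\mathscr{F}_t^X\}$ and transition function $P(t,x,\cdot)$. Throughout we use the hypothesis \eqref{Dynkin-cond} in the following form: for every bounded Borel $g$ on $E_0$ and every $t>0$,
\begin{equation*}
\int_E g(\Phi(z))\,P(t,x,dz)=\int_{E_0} g(w)\,Q(t,\Phi(x),dw).
\end{equation*}
This follows from \eqref{Q-def} by the change-of-variables formula for the image measure $P(t,x,\Phi^{-1}(\cdot))$.

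\textbf{Step 1: $Q$ is a transition function.} Surjectivity of $\Phi$ makes $Q(t,y,\cdot)$ defined for every $y$, and \eqref{Dynkin-cond} makes the definition independent of the choice of $x\in\Phi^{-1}(y)$. Each $Q(t,y,\cdot)$ is a probability measure on $E_0$, being the image of $P(t,x,\cdot)$ under $\Phi$.

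The delicate point is Borel measurability of $y\mapsto Q(t,y,B)$. The function $x\mapsto P(t,x,\Phi^{-1}(B))$ is Borel and equals $Q(t,\Phi(x),B)$. Hence for each Borel $C\subset[0,1]$ we have
\begin{equation*}
\{y:Q(t,y,B)\in C\}=\Phi\bigl(\{x:P(t,x,\Phi^{-1}(B))\in C\}\bigr).
\end{equation*}
This set is the image of a Borel set under a Borel map between Polish spaces, so it is analytic and therefore universally measurable. I expect this to be the main obstacle.

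Two ways around it suggest themselves. One is to work with universally measurable kernels, which suffices for all the integrals below. The other is to use a Borel (or universally measurable) right inverse $\sigma:E_0\to E$ of $\Phi$, given by the Jankov--von Neumann selection theorem, and write $Q(t,y,B)=P(t,\sigma(y),\Phi^{-1}(B))$. The $\mathscr{F}$-measurability of $Q(t,Y_s,B)$ used in Step 3 needs no selection at all, since $Q(t,Y_s,B)=P(t,X_s,\Phi^{-1}(B))$.

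\textbf{Step 2: Chapman--Kolmogorov for $Q$.} Pick $x\in\Phi^{-1}(y)$. Then
\begin{align*}
Q(s+t,y,B)&=\int_E P(s,x,dz)\,P(t,z,\Phi^{-1}(B))\\
&=\int_E P(s,x,dz)\,Q(t,\Phi(z),B)\\
&=\int_{E_0}Q(s,y,dw)\,Q(t,w,B).
\end{align*}
The last equality is the displayed identity above, applied with $g=Q(t,\cdot,B)$.

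\textbf{Step 3: Markov property of $Y=\Phi\circ X$.} Let $s,t\ge0$ and let $g$ be bounded Borel on $E_0$. Since $\mathscr{F}^Y_s\subset\mathscr{F}^X_s$, the Markov property of $X$ gives
\begin{equation*}
\mathbf{E}[g(Y_{s+t})\mid\mathscr{F}^X_s]=\int_E g(\Phi(z))\,P(t,X_s,dz)=\int_{E_0} g(w)\,Q(t,Y_s,dw).
\end{equation*}
The right side is $\sigma(Y_s)$-measurable, so it is in particular $\mathscr{F}^Y_s$-measurable. Conditioning the identity further on $\mathscr{F}^Y_s$ via the tower property yields
\begin{equation*}
\mathbf{E}[g(Y_{s+t})\mid\mathscr{F}^Y_s]=\int_{E_0} g(w)\,Q(t,Y_s,dw).
\end{equation*}
This is exactly the statement that $Y$ is Markov with transition function $Q$.

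For completeness, the finite-dimensional distributions are then obtained by iterating Step 3. They are those of a Markov process with initial law $\mathbf{P}\circ Y_0^{-1}$ and transition function $Q$.
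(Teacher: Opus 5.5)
The paper does not prove this statement. It quotes it from Dynkin and uses it as a black box, so there is no internal argument to compare with. Your argument is the standard one and is sound in structure: Chapman--Kolmogorov for $Q$ via image measures, then the Markov property of $Y$ by conditioning the Markov property of $X$ down to $\mathscr{F}^Y_s$. However, it leaves one point unfinished, and one remark points at the wrong $\sigma$-algebra.

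\textbf{The unfinished point.} This is the Borel measurability of $y\mapsto Q(t,y,B)$, which you leave as ``the main obstacle.'' Both of your workarounds only produce a universally measurable kernel. Also, the Jankov--von Neumann theorem gives a section that is in general only measurable with respect to the $\sigma$-algebra generated by analytic sets; a Borel surjection between Polish spaces need not have a Borel right inverse.

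Your own observation already closes the gap. Since $Q(t,\cdot,B)$ is a well-defined function on all of $E_0$, the complement of $\{y:Q(t,y,B)\in C\}$ is $\{y:Q(t,y,B)\in C^c\}=\Phi(\{x:P(t,x,\Phi^{-1}(B))\in C^c\})$. This is again the image of a Borel set, hence analytic. A subset of a Polish space that is analytic with analytic complement is Borel (Suslin's theorem), so $Q(t,\cdot,B)$ is Borel. The same argument shows that $y\mapsto\int g\,dQ(t,y,\cdot)$ is Borel for bounded Borel $g$. This matters for the conclusion as stated, since a transition function must be Borel in $y$.

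\textbf{The misdirected remark.} You say the measurability used in Step 3 ``needs no selection at all, since $Q(t,Y_s,B)=P(t,X_s,\Phi^{-1}(B))$.'' That identity only shows $\sigma(X_s)$-measurability. The tower-property step instead requires $\int g\,dQ(t,Y_s,\cdot)$ to be $\mathscr{F}^Y_s$-measurable, which is exactly the $E_0$-measurability question of Step 1. With the Suslin argument it is $\sigma(Y_s)$-measurable outright. With only universal measurability you would get measurability with respect to the $\mathbf{P}$-completion of $\sigma(Y_s)$. That still gives the conditional-expectation identity almost surely, but it delivers a weaker result than the theorem asserts.
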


\begin{theorem}[\citeauthor{RP81}, \citeyear{RP81}]\label{RP-Th}
Let $E$ and $E_0$ be complete separable metric spaces, and let $\Phi:E\mapsto E_0$ be Borel measurable and surjective.  Let $\lambda$ be a one-step transition function from $E_0$ to $E$ $($i.e., $y\in E_0\mapsto\lambda(y,\cdot)\in\mathscr{P}(E)$ is Borel measurable$)$ satisfying $\lambda(y,\Phi^{-1}(y))=1$ for all $y\in E_0$.  Let $P(t,x,\cdot)$ be the transition function for a Markov process $X$ in $E$ with initial distribution $\mu:=\int_{E_0}\lambda(y,\cdot)\,\mu_0(dy)$ for some $\mu_0\in\mathscr{P}(E_0)$.  If the condition
\begin{equation}\label{RP-cond}
\int_E P(t,x,\cdot)\,\lambda(y,dx)=\int_E\bigg[\int_{E_0}\lambda(z,\cdot)\,P(t,x,\Phi^{-1}(dz))\bigg]\lambda(y,dx),
\end{equation}
holds for every $t>0$ and $y\in E_0$, then $Y:=\Phi\circ X$ is a Markov process in $E_0$ with initial distribution $\mu_0$ and transition function
\begin{equation}\label{Q-def-RP}
Q(t,y,\cdot):=\int_E P(t,x,\Phi^{-1}(\cdot))\,\lambda(y,dx).
\end{equation}
\end{theorem}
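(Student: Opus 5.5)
The plan is to recast the Rogers--Pitman condition \eqref{RP-cond} as an intertwining identity between kernels, and then to identify the finite-dimensional distributions of $Y$ by induction on the number of time points.

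\textbf{Notation.} Write $P_t$ for the kernel $P(t,x,\cdot)$ on $E$, $\lambda$ for the kernel from $E_0$ to $E$, and $\Phi^*$ for the deterministic kernel $x\mapsto\delta_{\Phi(x)}$ from $E$ to $E_0$. Then \eqref{Q-def-RP} reads $Q_t=\lambda P_t\Phi^*$, a kernel on $E_0$. Here $y\mapsto Q(t,y,\cdot)$ is Borel because $\lambda$ and $P_t$ are measurable kernels and $\Phi$ is Borel. The hypothesis $\lambda(y,\Phi^{-1}(y))=1$ says $\lambda\Phi^*=I$ on $E_0$, that is, $\lambda(h\circ\Phi)=h$ for bounded Borel $h$. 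Condition \eqref{RP-cond} says exactly
\begin{equation*}
\lambda P_t=(\lambda P_t\Phi^*)\lambda=Q_t\lambda,\qquad t>0 .
\end{equation*}
So the first step is to read \eqref{RP-cond} as this kernel identity, interpreting $\int_{E_0}\lambda(z,\cdot)\,P(t,x,\Phi^{-1}(dz))$ as $(P_t\Phi^*\lambda)(x,\cdot)$.

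\textbf{Chapman--Kolmogorov for $Q$.} Using the Chapman--Kolmogorov equation for $P$ and the intertwining twice,
\begin{equation*}
Q_sQ_t=\lambda P_s\Phi^*\lambda P_t\Phi^*=\lambda P_s\Phi^*Q_t\lambda\Phi^*\cdots,
\end{equation*}
which is most cleanly organised as follows. First, $\lambda P_{s+t}=\lambda P_sP_t=Q_s\lambda P_t=Q_sQ_t\lambda$. Composing on the right with $\Phi^*$ and using $\lambda\Phi^*=I$ then gives $Q_{s+t}=Q_sQ_t$.

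\textbf{Finite-dimensional distributions.} The core step is an inductive statement. For $0<t_1<\cdots<t_k$, bounded Borel $f_1,\ldots,f_{k-1}$ on $E_0$, and bounded Borel $g$ on $E$, I claim
\begin{equation*}
\mathbf{E}\Big[\prod_{i<k}f_i(Y_{t_i})\,g(X_{t_k})\Big]=\int\mu_0(dy_0)\int Q_{t_1}(y_0,dy_1)f_1(y_1)\cdots\int Q_{t_k-t_{k-1}}(y_{k-1},dy_k)\,(\lambda g)(y_k).
\end{equation*}
In words, the conditional law of $X_{t_k}$ given $Y_{t_1},\ldots,Y_{t_k}$ is $\lambda(Y_{t_k},\cdot)$.

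For the base case, the initial law $\mu=\mu_0\lambda$ gives $\mathbf{E}g(X_{t_1})=\mu_0\lambda P_{t_1}g=\mu_0Q_{t_1}\lambda g$.

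For the inductive step, condition on $\mathscr{F}^X_{t_{k-1}}$ and use the Markov property of $X$ to replace $g(X_{t_k})$ by $(P_{t_k-t_{k-1}}g)(X_{t_{k-1}})$. Then apply the induction hypothesis with $g'=(f_{k-1}\circ\Phi)\cdot P_{t_k-t_{k-1}}g$. Since $\lambda(y,\cdot)$ is carried by $\Phi^{-1}(y)$,
\begin{equation*}
\lambda g'(y)=f_{k-1}(y)\,\lambda P_{t_k-t_{k-1}}g(y)=f_{k-1}(y)\,Q_{t_k-t_{k-1}}\lambda g(y),
\end{equation*}
which closes the induction.

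\textbf{Conclusion.} Taking $g=f_k\circ\Phi$ and using $\lambda(f_k\circ\Phi)=f_k$ shows that $Y$ has the finite-dimensional distributions of a Markov process with initial law $\mu_0$ and transition function $Q$. The case $t=0$ is handled by $Y_0\sim\mu_0\lambda\Phi^*=\mu_0$ and by including $f_0(y_0)$. This yields the Markov property of $Y$ with respect to its own filtration.

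\textbf{Expected obstacle.} The main difficulty is bookkeeping rather than depth. One must justify that the condition-on-$\Phi$ product $f_{k-1}(y)\lambda(\cdot)$ commutes through $\lambda$ using only $\lambda(y,\Phi^{-1}(y))=1$; this needs $\{(y,x):\Phi(x)=y\}$ to be measurable, which holds since $E_0$ is separable metric and $\Phi$ is Borel. One must also justify that the composed kernels are measurable so that Fubini applies; completeness and separability of $E,E_0$ guarantee this.
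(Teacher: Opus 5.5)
The paper does not prove this theorem: it is quoted from Rogers and Pitman (1981) and used only as a tool in the last section, so there is no in-paper argument to compare against. Judged on its own, your proof is correct, and it is essentially the standard Rogers--Pitman argument:
\begin{itemize}
\item you read \eqref{RP-cond} as the intertwining $\lambda P_t=Q_t\lambda$ with $Q_t=\lambda P_t\Phi^*$;
\item you obtain $Q_{s+t}=Q_sQ_t$ by composing $\lambda P_{s+t}=\lambda P_sP_t=Q_sQ_t\lambda$ on the right with $\Phi^*$ and using $\lambda\Phi^*=I$;
\item you prove by induction on the number of time points the formula for $\mathbf{E}\bigl[\prod_{i<k}f_i(Y_{t_i})\,g(X_{t_k})\bigr]$.
\end{itemize}
The inductive step uses only three facts: the Markov property of $X$, the identity $\lambda\bigl((f\circ\Phi)h\bigr)=f\cdot\lambda h$, and the intertwining. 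Because your induction carries a function of $X_{t_k}$ rather than of $Y_{t_k}$, it also gives the filtering identity in the Remark following the theorem, after a routine monotone class argument over finite sets of times.

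Three small cleanups are worth making.
\begin{itemize}
\item The first display in your Chapman--Kolmogorov paragraph is unfinished (it trails off with dots and is circular). Delete it in favour of the clean chain that follows.
\item The obstacle you anticipate does not arise. The identity $\lambda\bigl((f\circ\Phi)h\bigr)(y)=f(y)\,(\lambda h)(y)$ is checked separately for each fixed $y$. It needs only that the fiber $\Phi^{-1}(\{y\})$ is Borel, which holds because points are closed in a metric space and $\Phi$ is Borel. No joint measurability of $\{(y,x):\Phi(x)=y\}$ is required.
\item Compositions of kernels are measurable on arbitrary measurable spaces, so Fubini applies without topology. Completeness and separability of $E$ and $E_0$ are therefore never used in your argument; they are hypotheses of the quoted theorem, not ingredients of your proof.
\end{itemize}
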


\begin{remark}
Under the conditions of Theorem~\ref{RP-Th}, one can show that
\begin{equation*}
\textbf{P}_{\mu}(X(t)\in A\mid \Phi\circ X(s),\,0\le s\le t)=\lambda(\Phi\circ X(t),A)
\end{equation*}
for all $A\in\mathscr{B}(E)$, which gives an interpretation to $\lambda$.
\end{remark}

Theorem~\ref{RP-Th} is intended for situations in which Theorem~\ref{Dynkin-Th} does not apply.  If Theorem~\ref{Dynkin-Th} does apply, then 
Theorem~\ref{RP-Th} applies as well provided one additional condition, namely \eqref{RP-cond}, is satisfied, which in this case reduces to
\begin{equation}\label{RP-cond-2}
\int_E P(t,x,\cdot)\,\lambda(y,dx)=\int_{E_0}\lambda(z,\cdot)\,P(t,w,\Phi^{-1}(dz))
\end{equation}
for every $t>0$ and $y\in E_0$, where $w\in\Phi^{-1}(y)$.  Furthermore, Theorem~\ref{RP-Th} yields a slightly weaker conclusion because $X$ is required to have an initial distribution of a particular form that depends on $\lambda$.

Nevertheless, we will see that both theorems are applicable to our problem.

%If one is interested primarily in showing that $Y:=\Phi\circ X$ is a Markov process in $E_0$, one can use versions of these theorems expressed in terms of generators, such as \citeauthor{K98} (\citeyear{K98}, Corollary 3.5).

\section{Proof of Theorem~\ref{one}}

The $n=0$ case of \eqref{Dynkin-n} is clear: For each $\bm p\in\overline{\nabla}_\infty$,
\begin{equation*}
\Lambda_0^\theta(\bm p,\cdot)=\text{PD}_\theta(\cdot)=\textbf{P}\bigg(\Phi\bigg(\sum_{i=1}^\infty \rho_i\,\delta_{\xi_i}\bigg)\in\cdot\bigg)=\Pi_{\theta,\nu_0}(\Phi^{-1}(\cdot))=\Gamma_0^{\theta,\nu_0}(\mu,\Phi^{-1}(\cdot)),
\end{equation*}
regardless of $\mu\in\mathscr{P}(S)$.

We turn to $n\ge1$. A key step in the argument is given in the proof of Corollary~1.3 of \cite{EG93}.  There it is shown that
\begin{align}\label{rv-rep}
&\int_{S^n}\mu^n(dx_1\times\cdots\times dx_n)\,\Pi_{n+\theta,(n+\theta)^{-1}(\delta_{x_1}+\cdots+\delta_{x_n}+\theta\nu_0)}(\Phi^{-1}(\cdot))\nonumber\\
&\quad{}=\int_{S^n}\mu^n(dx_1\times\cdots\times dx_n)\,\textbf{P}\bigg(\Phi\bigg(\varepsilon\sum_{i=1}^n U_i^{(n)}\delta_{x_i}+(1-\varepsilon)\sum_{i=1}^\infty\rho_i\delta_{\xi_i}\bigg)\in\cdot\bigg),
\end{align}
where $\varepsilon$ is beta$(n,\theta)$, $(U_1^{(n)},\ldots,U_n^{(n)})$ is uniformly distributed over the $n$-simplex
\begin{equation*}
\Delta_n:=\bigg\{(p_1,\ldots,p_n)\in[0,1]^n:\sum_{i=1}^n p_i=1\bigg\}
\end{equation*}
(or, $(U_1^{(n)},\ldots,U_n^{(n)})$ is Dirichlet$(1,1,\ldots,1)$),
$(\rho_1,\rho_2,\ldots)$ is $\text{PD}_\theta$, $\xi_1,\xi_2,\ldots$ are i.i.d.\ $\nu_0$, and $\varepsilon$, $(U_1^{(n)},\ldots,U_n^{(n)})$, $(\rho_1,\rho_2,\ldots)$, and $(\xi_1,\xi_2,\ldots)$ are independent.

We let
\begin{equation*}
\overline{\Delta}_\infty:=\bigg\{\bm p=(p_1,p_2,\ldots)\in[0,1]^\infty:\sum_{i=1}^\infty p_i\le1\bigg\}
\end{equation*}
and define the function $\rho:\overline{\Delta}_\infty\mapsto\overline{\nabla}_\infty$ by
\begin{equation*}
\rho(p_1,p_2,\ldots):=(p_{(1)},p_{(2)},\ldots),
\end{equation*}
where $p_{(1)},p_{(2)},\ldots$ are the descending order statistics of $p_1,p_2,\ldots$.  And, given $\bm p\in\overline{\nabla}_\infty$, we let
\begin{equation}\label{p_0}
p_0:=1-\sum_{i=1}^\infty p_i,
\end{equation}
so that $\bm p\in\nabla_\infty$ if and only if $p_0=0$.

The case $n=1$ of \eqref{Dynkin-n} has already been established in the proof of the corollary cited above.  There it was shown that, for every $\mu\in\mathscr{P}(S)$,
\begin{align*}
&\int_S \mu(dx)\,\Pi_{1+\theta,(1+\theta)^{-1}(\delta_x+\theta\nu_0)}(\Phi^{-1}(\cdot))\\
&\quad{}=\int_S \mu(dx)\,\textbf{P}(\rho(\varepsilon,(1-\varepsilon)\rho_1,(1-\varepsilon)\rho_2,\ldots)\in\cdot)\\
&\quad{}=\text{PD}_\theta(\cdot),
\end{align*}
where $\varepsilon$ is beta$(1,\theta)$, while $(\rho_1,\rho_2,\ldots)$ is $\text{PD}_\theta$ and independent of $\varepsilon$. 
The first equality uses \eqref{rv-rep} and the second relies on the GEM representation of $\text{PD}_\theta$.  It follows that, for each $\bm p\in\overline{\nabla}_\infty$,
\begin{equation*}
\Lambda_1^\theta(\bm p,\cdot)=\text{PD}_\theta(\cdot)=\int_S \mu(dx)\,\Pi_{1+\theta,(1+\theta)^{-1}(\delta_x+\theta\nu_0)}(\Phi^{-1}(\cdot))=\Gamma_1^{\theta,\nu_0}(\mu,\Phi^{-1}(\cdot)),
\end{equation*}
regardless of $\mu\in\mathscr{P}(S)$.

We now turn to the general case, $n\ge2$.  We need to show that, for each $\bm p\in\overline{\nabla}_\infty$,
\begin{align*}
&\sum_{\bm n:|\bm n|=n}\frac{P_{\bm n}(\bm p)P_{\bm n}(\bm q)}{\int_{\overline{\nabla}_\infty}P_{\bm n}\,d{\rm PD}_\theta}\,{\rm PD}_\theta(d\bm q)\\
&\quad{}=\int_{S^n}\mu^n(dx_1\times\cdots\times dx_n)\,\Pi_{n+\theta,(n+\theta)^{-1}(\delta_{x_1}+\cdots+\delta_{x_n}+\theta\nu_0)}(\Phi^{-1}(d\bm q))
\end{align*}
whenever $\mu\in\Phi^{-1}(\bm p)$.  Given $\gamma,\lambda\in\mathscr{P}(\overline{\nabla}_\infty)$,
to show that $\gamma=\lambda$, it is enough to show that, for each $g$ in a separating class of functions in $C(\overline{\nabla}_\infty)$, $\int g\,d\gamma=\int g\,d\lambda$.  The separating class we use is the set of all $P_{\bm m}\in C(\overline{\nabla}_\infty)$ for $\bm m=(m_1,\ldots,m_k)$ with $m_1\ge\cdots\ge m_k\ge1$ and $k\in\textbf{N}$.  Lemma 3.1 of \cite{E92} confirms that this class is separating (because its linear span is dense in $C(\overline{\nabla}_\infty)$). So we fix an arbitrary such $\bm m$.  It remains to show that, for each $\bm p\in\overline{\nabla}_\infty$,
\begin{align}\label{identity}
&\int_{\overline{\nabla}_\infty}P_{\bm m}(\bm q)\sum_{\bm n:|\bm n|=n}\frac{P_{\bm n}(\bm p)P_{\bm n}(\bm q)}{\int_{\overline{\nabla}_\infty}P_{\bm n}\,d{\rm PD}_\theta}\,{\rm PD}_\theta(d\bm q)\nonumber\\
&{}=\int_{\overline{\nabla}_\infty}P_{\bm m}(\bm q)\int_{S^n}\mu^n(dx_1\times\cdots\times dx_n)\,\Pi_{n+\theta,(n+\theta)^{-1}(\delta_{x_1}+\cdots+\delta_{x_n}+\theta\nu_0)}(\Phi^{-1}(d\bm q))
\end{align}
whenever $\mu\in\Phi^{-1}(\bm p)$.  We show this by evaluating each side separately.

The left side of \eqref{identity} is, for given $\bm p\in\overline{\nabla}_\infty$,
\begin{equation}\label{LHS}
\sum_{\bm n:|\bm n|=n}\frac{\int_{\overline{\nabla}_\infty}P_{\bm m}P_{\bm n}\,d\text{PD}_\theta}{\int_{\overline{\nabla}_\infty}P_{\bm n}\,d\text{PD}_\theta}\,P_{\bm n}(\bm p),
\end{equation}
and the right side is, for given $\bm p\in\overline{\nabla}_\infty$,
\begin{align}\label{RHS}
&\int_{\overline{\nabla}_\infty}P_{\bm m}(\bm q)\int_{S^n}\mu^n(dx_1\times\cdots\times dx_n)\nonumber\\ \noalign{\vglue-2mm}
&\qquad\qquad\qquad\qquad{}\cdot\textbf{P}\bigg(\Phi\bigg(\varepsilon\sum_{i=1}^n U_i^{(n)}\delta_{x_i}+(1-\varepsilon)\sum_{i=1}^\infty\rho_i\delta_{\xi_i}\bigg)\in d\bm q\bigg),
\end{align}
where $\mu\in\Phi^{-1}(\bm p)$.  Here we have used \eqref{rv-rep}.  

Eq.\ \eqref{RHS}, in which $x_1,\ldots,x_n$ can be viewed as a random sample from $\mu$, can be better understood by first considering the special case $n=2$.  In that case, this is
\begin{align}\label{RHS2}
&\int_{\overline{\nabla}_\infty}P_{\bm m}(\bm q)\int_{S^2}\mu^2(dx_1\times dx_2)\nonumber\\ \noalign{\vglue-2mm}
&\qquad\qquad\qquad\qquad{}\cdot\textbf{P}\bigg(\Phi\bigg(\varepsilon[U\delta_{x_1}+(1-U)\delta_{x_2}]+(1-\varepsilon)\sum_{i=1}^\infty\rho_i\delta_{\xi_i}\bigg)\in d\bm q\bigg),
\end{align}
where $\varepsilon$ is beta$(2,\theta)$, $U$ is uniform$(0,1)$, $(\rho_1,\rho_2,\ldots)$ is $\text{PD}_\theta$, and $\xi_1,\xi_2,\ldots$ are i.i.d.\ $\nu_0$, and all are independent.  If $(x_1,x_2)$ is a random sample from $\mu$ and $\Phi(\mu)=\bm p\in\overline{\nabla}_\infty$, the probability that $x_1=x_2$ is
\begin{equation*}
\sum_{i=1}^\infty p_i^2=P_{(2)}(\bm p)
\end{equation*}
and the probability that $x_1\ne x_2$ is the complementary probability, namely
\begin{align*}
1-\sum_{i=1}^\infty p_i^2&=\bigg(\sum_{i=0}^\infty p_i\bigg)^2-\sum_{i=0}^\infty p_i^2+p_0^2\\
&=\sum_{i,j\ge0:i\ne j}p_i p_j+p_0^2\\
&=\sum_{i,j\ge1:i\ne j} p_i p_j+2p_0(1-p_0)+p_0^2\\
&=P_{(1,1)}(\bm p).
\end{align*}
The result is that \eqref{RHS2} equals
\begin{align*}
&\int_{\overline{\nabla}_\infty}P_{\bm m}(\bm q)\,\textbf{P}(\rho(\varepsilon,(1-\varepsilon)\rho_1,(1-\varepsilon)\rho_2,\ldots)\in d\bm q)\,P_{(2)}(\bm p)\\
&\qquad\quad{}+\int_{\overline{\nabla}_\infty}P_{\bm m}(\bm q)\,\textbf{P}(\rho(\varepsilon U,\varepsilon (1-U),(1-\varepsilon)\rho_1,(1-\varepsilon)\rho_2,\ldots)\in d\bm q)\,P_{(1,1)}(\bm p)\\
&\quad{}=\textbf{E}[P_{\bm m}(\rho(\varepsilon,(1-\varepsilon)\rho_1,(1-\varepsilon)\rho_2,\ldots))]\,P_{(2)}(\bm p)\\
&\qquad\quad{}+\textbf{E}[P_{\bm m}(\rho(\varepsilon U,\varepsilon (1-U),(1-\varepsilon)\rho_1,(1-\varepsilon)\rho_2,\ldots))]\,P_{(1,1)}(\bm p).
\end{align*}

More generally, \eqref{RHS} becomes
\begin{align}\label{RHS-1}
&\sum_{\bm n:|\bm n|=n}\int_{\overline{\nabla}_\infty}P_{\bm m}(\bm q)\textbf{P}(\rho(\varepsilon V_1^{\bm n},\ldots,\varepsilon V_l^{\bm n},(1-\varepsilon)\rho_1,(1-\varepsilon)\rho_2,\ldots)\in d\bm q)\,P_{\bm n}(\bm p)\nonumber\\
&\quad{}=\sum_{\bm n:|\bm n|=n}\textbf{E}[P_{\bm m}(\rho(\varepsilon V_1^{\bm n},\ldots,\varepsilon V_l^{\bm n},
(1-\varepsilon)\rho_1,(1-\varepsilon)\rho_2,\ldots))]\,P_{\bm n}(\bm p),
\end{align}
where $\varepsilon$ is beta$(n,\theta)$, $(V_1^{\bm n},\ldots,V_l^{\bm n})$ is Dirichlet$(\bm n)$, $(\rho_1,\rho_2,\ldots)$ and $\xi_1,\xi_2,\ldots$ are as before, and all are independent.

Comparing \eqref{LHS} and \eqref{RHS-1}, it therefore remains to show that
\begin{equation}\label{m-n-equation}
\frac{\int_{\overline{\nabla}_\infty}P_{\bm m}P_{\bm n}\,d\text{PD}_\theta}{\int_{\overline{\nabla}_\infty}P_{\bm n}\,d\text{PD}_\theta}
=\textbf{E}[P_{\bm m}(\rho(\varepsilon V_1^{\bm n},\ldots,\varepsilon V_l^{\bm n},
(1-\varepsilon)\rho_1,(1-\varepsilon)\rho_2,\ldots))]
\end{equation}
for $\bm m=(m_1,\ldots,m_k)$ and $\bm n=(n_1,\ldots,n_l)$ fixed but arbitrary.  We require $m_1\ge\cdots\ge m_k\ge1$, $n_1\ge\cdots\ge n_l\ge1$, and $k,l\in{\bf N}$, and we let $m=|\bm m|$ and $n=|\bm n|$.

Let us denote the sum in \eqref{sampling} by $P_{\bm n}^0(\bm p)$ (uniquely extended from $\nabla_\infty$ to $\overline{\nabla}_\infty$ by continuity).  Then, by \eqref{ESF},
\begin{equation}\label{ESF0}
\int_{\overline{\nabla}_\infty}P_{\bm n}^0\,d\text{PD}_\theta=(n_1-1)!\cdots(n_l-1)!\,\frac{\theta^l}{\theta_{(n)}}
\end{equation}
and $P_{\bm n}=C(\bm n)P_{\bm n}^0$ with
\begin{equation*}
C(\bm n):=\binom{n}{n_1,\ldots,n_l}\frac{1}{\alpha_1!\cdots\alpha_n!}.
\end{equation*}
Although \eqref{ESF} implicitly assumes $\bm n=(n_1,\ldots,n_l)$, $n_1\ge n_2\ge\cdots\ge n_l\ge1$, $l\in\textbf{N}$, and $n=|\bm n|$ (so that the probabilities sum to 1 for fixed $n$), it will be convenient to allow $n_1,\ldots,n_l$ to be arbitrary positive integers, not necessarily arranged in descending order.  Observe that $P_{\bm n}^0(\bm p)$ is the probability that in an ordered random sample of size $n$ from a population with allele frequencies $p_1,p_2,\ldots$, the first $n_1$ are of one allele, the next $n_2$ are of another allele, and so on.

The left side of \eqref{m-n-equation} is
\begin{equation}\label{LHS-1}
\frac{C(\bm m)\int_{\overline{\nabla}_\infty}P_{\bm m}^0 P_{\bm n}^0\,d\text{PD}_\theta}{\int_{\overline{\nabla}_\infty}P_{\bm n}^0\,d\text{PD}_\theta}.
\end{equation}
To see how this might be evaluated, we first consider the example $\bm m=(1,1)$ and $\bm n=(2,1)$.  Then \eqref{LHS-1} becomes
\begin{align}\label{1,1,2,1-example}
&C(1,1)\int_{\overline{\nabla}_\infty}P_{(1,1)}^0 P_{(2,1)}^0\,d\text{PD}_\theta\bigg/\int_{\overline{\nabla}_\infty}P_{(2,1)}^0\,d\text{PD}_\theta\nonumber\\
&\quad{}=\int_{\overline{\nabla}_\infty}(2P_{(3,2)}^0+2P_{(3,1,1)}^0+2P_{(2,2,1)}^0+P_{(2,1,1,1)}^0)\,d\text{PD}_\theta\bigg/\int_{\overline{\nabla}_\infty}P_{(2,1)}^0\,d\text{PD}_\theta\nonumber\\
&\quad{}=\bigg(2\cdot2\,\frac{\theta^2}{\theta_{(5)}}+2\cdot2\,\frac{\theta^3}{\theta_{(5)}}
+2\,\frac{\theta^3}{\theta_{(5)}}+\frac{\theta^4}{\theta_{(5)}}\bigg)\bigg/\frac{\theta^2}{\theta_{(3)}}\nonumber\\
&\quad{}=1-\frac{8+\theta}{(3+\theta)(4+\theta)}.
\end{align}
To explain the first equality, the integral in the numerator on the left side is the probability that an ordered  random sample of size 5 from $\text{PD}_\theta$ has the form $(x_1,x_2,x_3,x_3,x_4)$ with $x_1\ne x_2$ and $x_3\ne x_4$.  Let $D$ denote the event just mentioned.  Then
\begin{align*}
\textbf{P}(D)&=\textbf{P}(D\cap\{x_1=x_3,\,x_2=x_4\})+\textbf{P}(D\cap\{x_1=x_4,\,x_2=x_3\})\\
&\quad{}+\textbf{P}(D\cap\{x_1=x_3,\,x_2\ne x_4\})+\textbf{P}(D\cap\{x_2=x_3,\,x_1\ne x_4\})\\
&\quad{}+\textbf{P}(D\cap\{x_1=x_4,\,x_2\ne x_3\})+\textbf{P}(D\cap\{x_2=x_4,\,x_1\ne x_3\})\\
&\quad{}+\textbf{P}(D\cap\{x_1\ne x_3,\,x_1\ne x_4,\,x_2\ne x_3,\,x_2\ne x_4\}),
\end{align*}
which is the integral in the numerator on the right side of the first equality.  The second equality uses \eqref{ESF0}.

We can now state the general result.

\begin{lemma}[\citeauthor{F10}, \citeyear{F10}, Eq.~(5.101)]\label{two}
For $\bm m=(m_1,\ldots,m_k)$ with $m_1\ge\cdots\ge m_k\ge1$, $\bm n=(n_1,\ldots,n_l)$ with $n_1\ge\cdots\ge n_l\ge1$, $k,l\in{\bf N}$, $m=|\bm m|$, and $n=|\bm n|$,
\begin{align}\label{LHS-2}
\frac{\int_{\overline{\nabla}_\infty}P_{\bm m}P_{\bm n}\,d\text{PD}_\theta}{\int_{\overline{\nabla}_\infty}P_{\bm n}\,d\text{PD}_\theta}
&=C(\bm m)\sum_{r=0}^{k\wedge l}\bigg[\sum_{\substack{I\subset\{1,\ldots,k\}:\\ |I|=r}}\;\sum_{\substack{\gamma:I\mapsto\{1,\ldots,l\}\\ \text{\rm one-to-one}}}\nonumber\\
&\quad\;{}\dfrac{\prod_{i\in I}(m_i+n_{\gamma(i)}-1)!\,\prod_{i\in I^c}(m_i-1)!}{\prod_{i\in I}(n_{\gamma(i)}-1)!}\bigg]\frac{\theta^{k-r}}{(n+\theta)_{(m)}},
\end{align}
where $I^c:=\{1,\ldots,k\}-I$.
\end{lemma}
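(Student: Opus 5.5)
We prove Lemma~\ref{two} by computing the numerator exactly as in the worked example \eqref{1,1,2,1-example}, and then dividing by \eqref{ESF0}.

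Since $P_{\bm m}=C(\bm m)P_{\bm m}^0$ and $P_{\bm n}=C(\bm n)P_{\bm n}^0$, the factor $C(\bm n)$ cancels. The left side of \eqref{LHS-2} is therefore \eqref{LHS-1}, and it suffices to evaluate $\int P_{\bm m}^0P_{\bm n}^0\,d\text{PD}_\theta$.

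\emph{Step 1: the product as a single sampling probability.} Interpret $P_{\bm m}^0(\bm q)P_{\bm n}^0(\bm q)$ as the probability that an ordered sample of size $m+n$ has a prescribed partition structure:
\begin{itemize}
\item the first $m$ draws form $k$ consecutive blocks of sizes $m_1,\ldots,m_k$, each block of one allele, with the $k$ alleles distinct;
\item the last $n$ draws form $l$ consecutive blocks of sizes $n_1,\ldots,n_l$, again with distinct alleles.
\end{itemize}
Decompose this event according to which $m$-blocks share an allele with which $n$-blocks. Distinctness within each group forces such a coincidence pattern to be a partial matching. It is therefore given by a set $I\subset\{1,\ldots,k\}$ with $|I|=r$ and a one-to-one map $\gamma:I\to\{1,\ldots,l\}$. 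Each such matching yields the event that the sample has exactly $k+l-r$ distinct alleles, with the $m$-blocks in $I$ merged with their partners.

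Extended (by continuity, as in the text) to arbitrary orderings of block sizes, this event has probability $P^0_{\bm s}(\bm q)$ after reordering the sample. Here $\bm s$ consists of the sizes $m_i+n_{\gamma(i)}$ for $i\in I$, together with $m_i$ for $i\in I^c$ and $n_j$ for $j\notin\gamma(I)$. Reordering the sample is legitimate because the sample is exchangeable. So
\begin{equation*}
P_{\bm m}^0P_{\bm n}^0=\sum_{r}\sum_{I,\gamma}P^0_{\bm s(I,\gamma)},
\end{equation*}
and the decomposition is disjoint and exhaustive.

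\emph{Step 2: integrate and divide.} Apply \eqref{ESF0} to each term. Note that \eqref{ESF0} holds for unordered block sizes by exchangeability. This gives
\begin{equation*}
\int P^0_{\bm s}\,d\text{PD}_\theta=\prod_{i\in I}(m_i+n_{\gamma(i)}-1)!\prod_{i\in I^c}(m_i-1)!\prod_{j\notin\gamma(I)}(n_j-1)!\;\frac{\theta^{k+l-r}}{\theta_{(m+n)}}.
\end{equation*}
Now divide by $\int P_{\bm n}^0\,d\text{PD}_\theta=\prod_j(n_j-1)!\,\theta^l/\theta_{(n)}$:
\begin{itemize}
\item the factors $(n_j-1)!$ for $j\notin\gamma(I)$ cancel, leaving $\prod_{i\in I}(n_{\gamma(i)}-1)!$ in the denominator;
\item $\theta^{k+l-r}/\theta^l=\theta^{k-r}$;
\item $\theta_{(m+n)}/\theta_{(n)}=(n+\theta)_{(m)}$.
\end{itemize}
Multiplying by $C(\bm m)$ gives exactly \eqref{LHS-2}.

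The main obstacle is Step 1. The delicate points are (i) that the coincidence patterns are exactly the partial injections $(I,\gamma)$, which follows from the distinctness constraints within each group, and (ii) that these events are disjoint. Point (ii) holds because each event fixes all equalities and inequalities among the $k+l$ block alleles. One also needs the justification that $P^0_{\bm s}$ with unordered sizes is well defined and satisfies \eqref{ESF0}, via exchangeability of samples from $\text{PD}_\theta$. Finally, the identity should be argued on $\nabla_\infty$ and extended to $\overline{\nabla}_\infty$ by continuity. This costs nothing, since $\text{PD}_\theta(\nabla_\infty)=1$.
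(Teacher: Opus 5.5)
Your proposal is correct and is essentially the paper's proof: your partial-matching decomposition $P^0_{\bm m}P^0_{\bm n}=\sum_{r}\sum_{I,\gamma}P^0_{\bm s(I,\gamma)}$ is exactly the paper's expansion in terms of $P^0_{\zeta^\circ(\bm m,\bm n,I,\gamma)}$. From there, both arguments apply \eqref{ESF0} termwise, divide by $\int P^0_{\bm n}\,d\text{PD}_\theta$, and use $\theta_{(m+n)}/\theta_{(n)}=(n+\theta)_{(m)}$. One small wording point: allowing unordered block sizes is not an extension ``by continuity''; it is immediate because $P^0_{\bm s}$, as a sum over distinct indices, is symmetric in the entries of $\bm s$ (just relabel the summation indices).
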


\begin{proof}
We calculate
\begin{equation*}
\int_{\overline{\nabla}_\infty}P_{\bm m}^0 P_{\bm n}^0\,d\text{PD}_\theta
=\int_{\overline{\nabla}_\infty}\bigg[\sum_{r=0}^{k\wedge l}\;\sum_{\substack{I\subset\{1,\ldots,k\}:\\ |I|=r}}\;\sum_{\substack{\gamma:I\mapsto\{1,\ldots,l\}\\ \text{is one-to-one}}}P_{\zeta^\circ(\bm m,\bm n,I,\gamma)}^0\bigg]\,d\text{PD}_\theta,
\end{equation*}
where first $\zeta(\bm m,\bm n,I,\gamma)\in(\textbf{Z}_+)^{k+l}$ is defined by
\begin{equation*}
\zeta(\bm m,\bm n,I,\gamma)_i:=\begin{cases}
m_i+n_{\gamma(i)}&\text{if $i\in I$},\\
m_i&\text{if $i\in\{1,\ldots,k\}-I$},\\
n_{i-k}&\text{if $i\in\{k+1,\ldots,k+l\}-\text{Range}(k+\gamma)$},\\
0&\text{if $i\in\text{Range}(k+\gamma)$}.
\end{cases}
\end{equation*}
and then $\zeta^\circ(\bm m,\bm n,I,\gamma)\in\textbf{N}^{k+l-r}$ deletes the $|I|$ zeros in $\zeta(\bm m,\bm n,I,\gamma)$.
It follows that
\begin{align*}
&\int_{\overline{\nabla}_\infty}P_{\bm m}^0 P_{\bm n}^0\,d\text{PD}_\theta\\
&\quad{}=\sum_{r=0}^{k\wedge l}\bigg[\sum_{\substack{I\subset\{1,\ldots,k\}:\\ |I|=r}}\;\sum_{\substack{\gamma:I\mapsto\{1,\ldots,l\}\\ \text{one-to-one}}}\;\prod_{i\in I}(m_i+n_{\gamma(i)}-1)!\prod_{i\in I^c}(m_i-1)!\\
&\qquad\qquad\qquad\qquad\qquad\qquad\qquad{}\cdot\prod_{i\in\text{Range}(\gamma)^c}(n_i-1)!\,\bigg]\frac{\theta^{k+l-r}}{\theta_{(m+n)}},
\end{align*}
where $I^c:=\{1,\ldots,k\}-I$ and $\text{Range}(\gamma)^c:=\{1,\ldots,l\}-\text{Range}(\gamma)$,
and therefore \eqref{LHS-2} follows.
\end{proof}

We turn to the evaluation of the right side of \eqref{m-n-equation}.  Just as we did for the left side, we first consider the example $\bm m=(1,1)$ and $\bm n=(2,1)$.  The simplest approach is to notice that $P_{(1,1)}=1-P_{(2)}$, but we prefer to use a method that generalizes to arbitrary $\bm m$ and $\bm n$.  Thus,
\begin{align*}
&\textbf{E}[P_{(1,1)}(\rho(\varepsilon V_1^{(2,1)},\varepsilon V_2^{(2,1)},(1-\varepsilon)\rho_1,(1-\varepsilon)\rho_2,\ldots))]\\
&\quad{}=\int_{\overline{\nabla}_\infty}P_{(1,1)}(\bm p)\,\textbf{P}(\rho(\varepsilon V_1^{(2,1)},\varepsilon V_2^{(2,1)},(1-\varepsilon)\rho_1,(1-\varepsilon)\rho_2,\ldots)\in d\bm p)\\
&\quad{}=\int_{\overline{\nabla}_\infty}\bigg(2p_1 p_2+2p_1\sum_{j\ge3} p_j+2p_2\sum_{j\ge3} p_j+\sum_{i,j\ge3:i\ne j} p_i p_j\bigg)\\
&\qquad\qquad\qquad{}\cdot\textbf{P}(\rho(\varepsilon V_1^{(2,1)},\varepsilon V_2^{(2,1)},(1-\varepsilon)\rho_1,(1-\varepsilon)\rho_2,\ldots)\in d\bm p)\\
&\quad{}=2\textbf{E}[\varepsilon^2]\,\textbf{E}[V_1^{(2,1)}V_2^{(2,1)}]+2\textbf{E}[\varepsilon(1-\varepsilon)]\,\textbf{E}[V_1^{(2,1)}]\,\textbf{E}\bigg[\sum_{i\ge1}\rho_i\bigg]\\
\noalign{\vglue-1.5mm}
&\qquad\quad{}+2\textbf{E}[\varepsilon(1-\varepsilon)]\,\textbf{E}[V_2^{(2,1)}]\,\textbf{E}\bigg[\sum_{i\ge1}\rho_i\bigg]+\textbf{E}[(1-\varepsilon)^2]\,\textbf{E}[P_{(1,1)}(\rho_1,\rho_2,\ldots)]\\
&\quad{}=2\,\frac{3\cdot4}{(3+\theta)(4+\theta)}\,\frac16+2\,\frac{3\,\theta}{(3+\theta)(4+\theta)}\,\frac23+2\,\frac{3\,\theta}{(3+\theta)(4+\theta)}\,\frac13\\
&\qquad\quad{}+\frac{\theta(1+\theta)}{(3+\theta)(4+\theta)}\,\frac{\theta}{1+\theta}\\
&\quad{}=1-\frac{8+\theta}{(3+\theta)(4+\theta)},
\end{align*}
where $\varepsilon$ is beta$(3,\theta)$, $(V_1^{(2,1)},V_2^{(2,1)})$ is Dirichlet$(2,1)$, $(\rho_1,\rho_2.\ldots)$ is $\text{PD}_\theta$, and all are independent.  This matches \eqref{1,1,2,1-example}.

We can now treat the general case.

\begin{lemma}[\citeauthor{F10}, \citeyear{F10}, Lemma 5.9]\label{three}
For $\bm m=(m_1,\ldots,m_k)$ with $m_1\ge\cdots\ge m_k\ge1$, $\bm n=(n_1,\ldots,n_l)$ with $n_1\ge\cdots\ge n_l\ge1$, $k,l\in{\bf N}$, $m=|\bm m|$, and $n=|\bm n|$,
\begin{align}\label{RHS-2}
&\textbf{E}[P_{\bm m}(\rho(\varepsilon V_1^{\bm n},\ldots,\varepsilon V_l^{\bm n},(1-\varepsilon)\rho_1,(1-\varepsilon)\rho_2,\ldots))]\nonumber\\
&\quad{}=C(\bm m)\sum_{r=0}^{k\wedge l}\bigg[\sum_{\substack{I\subset\{1,\ldots,k\}:\\ |I|=r}}\;\sum_{\substack{\gamma:I\mapsto\{1,\ldots,l\}\\ \text{\rm one-to-one}}}\nonumber\\
&\qquad\qquad\qquad\quad\dfrac{\prod_{i\in I}(m_i+n_{\gamma(i)}-1)!\,\prod_{i\in I^c}(m_i-1)!}{\prod_{i\in I}(n_{\gamma(i)}-1)!}\bigg]\,\frac{\theta^{k-r}}{(n+\theta)_{(m)}}.
\end{align}
\end{lemma}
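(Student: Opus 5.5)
We write $P_{\bm m}=C(\bm m)P^0_{\bm m}$ and compute $\textbf{E}[P^0_{\bm m}(\rho(\bm Y))]$, where
\[
\bm Y=(\varepsilon V_1^{\bm n},\ldots,\varepsilon V_l^{\bm n},(1-\varepsilon)\rho_1,(1-\varepsilon)\rho_2,\ldots).
\]
Since $P^0_{\bm m}$ is a symmetric function of the coordinates, the ordering map $\rho$ can be dropped. The $l$ ``Dirichlet'' coordinates sum to $\varepsilon$, and the remaining coordinates sum to $1-\varepsilon$ a.s., because $\text{PD}_\theta$ lives on $\nabla_\infty$. So we may evaluate $P^0_{\bm m}$ by its defining sum over distinct indices, with no continuous-extension issues.

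We expand, as in the $(1,1),(2,1)$ example. In $\sum_{i_1,\ldots,i_k\ \text{distinct}}\prod_j Y_{i_j}^{m_j}$, each $j$ is assigned either to one of the first $l$ coordinates or to the $\rho$ part. Let $I\subset\{1,\ldots,k\}$ be the set of $j$ landing among the first $l$ coordinates, with $|I|=r$. Distinctness forces the map $\gamma:I\to\{1,\ldots,l\}$ to be one-to-one. The remaining $k-r$ indices range over distinct $\rho$-coordinates. By independence, the $(I,\gamma)$ term equals
\[
\textbf{E}\bigl[\varepsilon^{\sum_{I}m_i}(1-\varepsilon)^{\sum_{I^c}m_i}\bigr]\;
\textbf{E}\Bigl[\prod_{i\in I}(V^{\bm n}_{\gamma(i)})^{m_i}\Bigr]\;
\int P^0_{(m_i)_{i\in I^c}}\,d\text{PD}_\theta .
\]
Each factor is explicit. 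Write $a=\sum_I m_i$ and $b=m-a$. The beta$(n,\theta)$ moment is $n_{(a)}\theta_{(b)}/(n+\theta)_{(m)}$. The Dirichlet$(\bm n)$ moment is $\prod_{i\in I}(n_{\gamma(i)})_{(m_i)}/n_{(a)}$. By \eqref{ESF0}, the last factor is $\prod_{I^c}(m_i-1)!\,\theta^{k-r}/\theta_{(b)}$.

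Multiplying the three factors, $n_{(a)}$ and $\theta_{(b)}$ cancel. Using $(n_j)_{(m_i)}=(m_i+n_j-1)!/(n_j-1)!$, the term becomes
\[
\frac{\prod_{I}(m_i+n_{\gamma(i)}-1)!\,\prod_{I^c}(m_i-1)!}{\prod_I (n_{\gamma(i)}-1)!}\,\frac{\theta^{k-r}}{(n+\theta)_{(m)}}.
\]
Summing over $r$, $I$ and $\gamma$ and multiplying by $C(\bm m)$ gives \eqref{RHS-2}. The edge cases need a brief check: $r=0$ (pure $\text{PD}_\theta$ term, moment $\theta_{(m)}$) and $I^c=\emptyset$ (empty ESF factor equal to $1$) both fit the formula.

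The main obstacle is justifying the interchange of expectation with the infinite sum over distinct indices, together with the factorization. The summands are nonnegative, so Tonelli handles the interchange. The one remaining point is that $\textbf{E}\sum_{\text{distinct}}\prod_{i\in I^c}\rho_{i}^{m_i}$ equals $\int P^0\,d\text{PD}_\theta$, which holds by definition on $\nabla_\infty$. No cross terms are lost, because every index tuple falls into exactly one pair $(I,\gamma)$.
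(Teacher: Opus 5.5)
Your proposal is correct and follows essentially the same argument as the paper. The paper also expands $P^0_{\bm m}$ by $(r,I,\gamma)$ with a one-to-one $\lambda$ into the $\text{PD}_\theta$ coordinates, factors each term into the beta$(n,\theta)$ moment, the Dirichlet$(\bm n)$ moment and the Ewens sampling formula integral \eqref{ESF0}, and cancels $n_{(a)}$ and $\theta_{(b)}$ exactly as you do. Your remarks on dropping $\rho$ by symmetry, on $\rho(\bm Y)\in\nabla_\infty$ almost surely, and on using Tonelli make explicit what the paper leaves implicit.
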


\begin{proof}
We calculate
\begin{align}\label{prelim}
&\textbf{E}[P_{\bm m}(\rho(\varepsilon V_1^{\bm n},\ldots,\varepsilon V_l^{\bm n},(1-\varepsilon)\rho_1,(1-\varepsilon)\rho_2,\ldots))]\nonumber\\
&\quad{}=C(\bm m)\int_{\overline{\nabla}_\infty}P_{\bm m}^0(\bm p)\,\textbf{P}(\rho(\varepsilon V_1^{\bm n},\ldots,\varepsilon V_l^{\bm n},(1-\varepsilon)\rho_1,(1-\varepsilon)\rho_2,\ldots)\in d\bm p)\nonumber\\
&\quad{}=C(\bm m)\int_{\overline{\nabla}_\infty}\;\sum_{r=0}^{k\wedge l}\;\sum_{\substack{I\subset\{1,\ldots,k\}:\\ |I|=r}}\;\sum_{\substack{\gamma:I\mapsto\{1,\ldots,l\}\\ \text{one-to-one}}}\;\sum_{\substack{\lambda:I^c\mapsto\{l+1,l+2,\ldots\}\\ \text{one-to-one}}}\;\prod_{i\in I}p_{\gamma(i)}^{m_i}\prod_{i\in I^c}p_{\lambda(i)}^{m_i}\nonumber\\
&\qquad\qquad\qquad\qquad\qquad{}\textbf{P}(\rho(\varepsilon V_1^{\bm n},\ldots,\varepsilon V_l^{\bm n},(1-\varepsilon)\rho_1,(1-\varepsilon)\rho_2,\ldots)\in d\bm p)\nonumber\\
&\quad{}=C(\bm m)\sum_{r=0}^{k\wedge l}\;\sum_{\substack{I\subset\{1,\ldots,k\}:\\ |I|=r}}\;\sum_{\substack{\gamma:I\mapsto\{1,\ldots,l\}\\ \text{one-to-one}}}\textbf{E}[\varepsilon^{\sum_{i\in I}m_i}(1-\varepsilon)^{\sum_{i\in I^c}m_i}]\nonumber\\
&\qquad\qquad\qquad\qquad\qquad\qquad\quad\;{}\cdot\textbf{E}\bigg[\prod_{i\in I}(V_{\gamma(i)}^{\bm n})^{m_i}\bigg]\,\textbf{E}\bigg[\sum_{\substack{\lambda:I^c\mapsto\textbf{N}\\ \text{one-to-one}}}\prod_{i\in I^c}\rho_{\lambda(i)}^{m_i}\bigg].
\end{align}
The three remaining expectations are readily evaluated: First,
\begin{align*}
\textbf{E}[\varepsilon^{\sum_{i\in I}m_i}(1-\varepsilon)^{\sum_{i\in I^c}m_i}]&=\frac{\Gamma(n+\theta)}{\Gamma(n)\Gamma(\theta)}\,\frac{\Gamma(n+m_0)\Gamma(\theta+m-m_0)}{\Gamma(n+m+\theta)}\\
&=\frac{n_{(m_0)}\,\theta_{(m-m_0)}}{(n+\theta)_{(m)}},
\end{align*}
where $m_0:=\sum_{i\in I}m_i$.  Second,
\begin{align*}
\textbf{E}\bigg[\prod_{i\in I}(V_{\gamma(i)}^{\bm n})^{m_i}\bigg]&=\frac{\Gamma(n)}{\Gamma(n_1)\cdots\Gamma(n_l)}\,\dfrac{\prod_{i\in I}\Gamma(m_i+n_{\gamma(i)})\prod_{i\in\text{Range}(\gamma)^c}\Gamma(n_i)}{\Gamma(m_0+n)}\\
&=\frac{1}{n_{(m_0)}}\,\dfrac{\prod_{i\in I}(m_i+n_{\gamma(i)}-1)!}{\prod_{i\in I}(n_{\gamma(i)}-1)!},
\end{align*}
and third, with $\widetilde{\bm m}\in\textbf{N}^{k-r}$ defined to have components $m_i$ for $i\in I^c$,
\begin{equation*}
\textbf{E}\bigg[\sum_{\substack{\lambda:I^c\mapsto\textbf{N}\\ \text{one-to-one}}}\prod_{i\in I^c}\rho_{\lambda(i)}^{m_i}\bigg]=\int_{\overline{\nabla}_\infty}P_{\widetilde{\bm m}}^0\,d\text{PD}_\theta=\prod_{i\in I^c}(m_i-1)!\,\frac{\theta^{k-r}}{\theta_{(m-m_0)}}.
\end{equation*}
Substituting these expectations into \eqref{prelim}, we obtain \eqref{RHS-2}.
\end{proof}

Finally, Lemmas \ref{two} and \ref{three} yield \eqref{m-n-equation}, completing the proof of Theorem~\ref{one}. \qed

%\newpage

\section{Applicability of the Rogers--Pitman theorem}\label{sec:proof-RP}

To use Theorem~\ref{RP-Th} in the present setting, we would need to define the one-step transition function $\lambda$ from $\overline{\nabla}_\infty$ to $\mathscr{P}(S)$, and a natural candidate is
\begin{equation*}
\lambda(\bm p,\cdot):=\textbf{P}\bigg(\sum_{i=1}^\infty p_i\,\delta_{\zeta_i}+p_0\,\nu_0\in\cdot\bigg),
\end{equation*}
where $\zeta_1,\zeta_2,\ldots$ are i.i.d.\ $\nu_0$ (recall \eqref{p_0}).  Indeed, $\lambda(\bm p,\cdot)$ is the distribution of a random Borel probability measure $\mu$ on $S$ with $\Phi(\mu)=\bm p$ almost surely.

Since condition \eqref{Dynkin-cond} of Theorem~\ref{Dynkin-Th} holds, \eqref{Q-def-RP} reduces to \eqref{Q-def}, which has already been established by proving Theorem~\ref{one}.  Thus, it would remain only to confirm \eqref{RP-cond-2}, that is, to show that, for each $t>0$ and $\bm p\in\overline{\nabla}_\infty$,
\begin{equation*}
\int_{\mathscr{P}(S)} P(t,\mu,\cdot)\,\lambda(\bm p,d\mu)=\int_{\overline{\nabla}_\infty}\lambda(\bm q,\cdot)\,P(t,\nu,\Phi^{-1}(d\bm q)).
\end{equation*}
where $\nu\in\Phi^{-1}(\bm p)$.  By \eqref{P} and \eqref{Gamma}, this holds if, for each $n\ge0$ and $\bm p\in\overline{\nabla}_\infty$,
\begin{equation}\label{RP-cond_n}
\int_{\mathscr{P}(S)} \Gamma_n^{\theta,\nu_0}(\mu,\cdot)\,\lambda(\bm p,d\mu)=\int_{\overline{\nabla}_\infty}\lambda(\bm q,\cdot)\,\Gamma_n^{\theta,\nu_0}(\nu,\Phi^{-1}(d\bm q)),
\end{equation}
where $\nu\in\Phi^{-1}(\bm p)$.

For $n=0$, this is clear:  The left side of \eqref{RP-cond_n} is $\Pi_{\theta,\nu_0}(\cdot)$, and the right side is
\begin{align*}
&\int_{\overline{\nabla}_\infty}\textbf{P}\bigg(\sum_{i=1}^\infty q_i\,\delta_{\zeta_i}+q_0\,\nu_0\in\cdot\bigg)\textbf{P}\bigg(\sum_{i=1}^\infty \rho_i\,\delta_{\xi_i}\in\Phi^{-1}(d\bm q)\bigg)\\
&\quad{}=\int_{\overline{\nabla}_\infty}\textbf{P}\bigg(\sum_{i=1}^\infty q_i\,\delta_{\zeta_i}+q_0\,\nu_0\in\cdot\bigg)\textbf{P}((\rho_1,\rho_2,\ldots)\in d\bm q)\\
&\quad{}=\textbf{P}\bigg(\sum_{i=1}^\infty \rho_i\,\delta_{\zeta_i}\in\cdot\bigg)\\
&\quad{}=\Pi_{\theta,\nu_0}(\cdot),
\end{align*}
where $(\rho_1,\rho_2,\ldots)$ is $\text{PD}_\theta$, $\xi_1,\xi_2,\ldots$ are i.i.d.\ $\nu_0$, $\zeta_1,\zeta_2,\ldots$ are i.i.d.\ $\nu_0$, and all are independent.
For $n\ge1$, we confirm \eqref{RP-cond_n} using
\begin{align*}
\Gamma_n^{\theta,\nu_0}(\mu,\cdot)&:=\int_{S^n}\mu^n(dx_1\times\cdots\times dx_n)\,\Pi_{n+\theta,(n+\theta)^{-1}(\delta_{x_1}+\cdots+\delta_{x_n}+\theta\nu_0)}(\cdot)\\
&\;=\int_{S^n}\mu^n(dx_1\times\cdots\times dx_n)\,\textbf{P}\bigg(\varepsilon\sum_{i=1}^n U_i^{(n)}\delta_{x_i}+(1-\varepsilon)\sum_{i=1}^\infty\rho_i\,\delta_{\xi_i}\in\cdot\bigg),
\end{align*}
where $\varepsilon$ is beta$(n,\theta)$, $(U_1^{(n)},\ldots,U_n^{(n)})$ is uniform over $\Delta_n$, $(\rho_1,\rho_2,\ldots)$ is $\text{PD}_\theta$, $\xi_1,\xi_2,\ldots$ are i.i.d.\ $\nu_0$, and all are independent.

We begin with the left side of \eqref{RP-cond_n}.  We get
\begin{align}\label{LHS_n}
&\int_{\mathscr{P}(S)}\int_{S^n}\mu^n(dx_1\times\cdots\times dx_n)\,\textbf{P}\bigg(\varepsilon\sum_{i=1}^n U_i^{(n)}\delta_{x_i}+(1-\varepsilon)\sum_{i=1}^\infty\rho_i\,\delta_{\xi_i}\in\cdot\bigg)\nonumber\\
&\qquad\qquad{}\cdot\textbf{P}\bigg(\sum_{i=1}^\infty p_i\,\delta_{\zeta_i}+p_0\,\nu_0\in d\mu\bigg)\nonumber\\
&\quad{}=\textbf{E}\bigg[\int_{S^n}\bigg(\sum_{i=1}^\infty p_i\,\delta_{\zeta_i}+p_0\,\nu_0\bigg)^n(dx_1\times\cdots\times dx_n)\nonumber\\
&\qquad\qquad\quad{}\cdot\textbf{P}\bigg(\varepsilon\sum_{i=1}^n U_i^{(n)}\delta_{x_i}+(1-\varepsilon)\sum_{i=1}^\infty\rho_i\,\delta_{\xi_i}\in\cdot\bigg)\bigg]\nonumber\\
&\quad{}=\sum_{\bm n:|\bm n|=n}\textbf{P}\bigg(\varepsilon(V_1^{\bm n}\delta_{\zeta_1}+\cdots+V_l^{\bm n}\delta_{\zeta_l})+(1-\varepsilon)\sum_{i=1}^\infty\rho_i\,\delta_{\xi_i}\in\cdot\bigg)\,P_{\bm n}(\bm p),
\end{align}
where $(V_1^{\bm n},\ldots,V_l^{\bm n})$ is Dirichlet$(\bm n)$, and the others are independent and distributed as before.  The last equality in \eqref{LHS_n} can be made more transparent by examining the case $n=2$ in more detail.  Let
\begin{equation*}
h(x_1,x_2):=\textbf{P}\bigg(\varepsilon\sum_{i=1}^2 U_i^{(2)}\delta_{x_i}+(1-\varepsilon)\sum_{i=1}^\infty\rho_i\,\delta_{\xi_i}\in\cdot\bigg).
\end{equation*}
Then
\begin{align*}
&\textbf{E}\bigg[\int_{S^2}\bigg(\sum_{i=1}^\infty p_i\,\delta_{\zeta_i}+p_0\,\nu_0\bigg)^2(dx_1\times dx_2)\,h(x_1,x_2)\bigg]\nonumber\\
&\quad{}=\sum_{i=1}^\infty p_i^2\,\text{E}\bigg[\int_{S^2}(\delta_{\zeta_i}\times\delta_{\zeta_i})(dx_1\times dx_2)\,h(x_1,x_2)\bigg]\\
&\qquad\quad{}+\sum_{i,j\ge1:i\ne j}p_ip_j\,\text{E}\bigg[\int_{S^2}(\delta_{\zeta_i}\times\delta_{\zeta_j})(dx_1\times dx_2)\,h(x_1,x_2)\bigg]\\
&\qquad\quad{}+\sum_{i=1}^\infty p_ip_0\,\text{E}\bigg[\int_{S^2}(\delta_{\zeta_i}\times\nu_0)(dx_1\times dx_2)\,h(x_1,x_2)\bigg]\\
&\qquad\quad{}+\sum_{j=1}^\infty p_0p_j\,\text{E}\bigg[\int_{S^2}(\nu_0\times\delta_{\zeta_j})(dx_1\times dx_2)\,h(x_1,x_2)\bigg]\\
&\qquad\quad{}+p_0^2\,\text{E}\bigg[\int_{S^2}(\nu_0\times\nu_0)(dx_1\times dx_2)\,h(x_1,x_2)\bigg]\\
&\quad{}=\sum_{i=1}^\infty p_i^2\,\textbf{P}\bigg(\varepsilon\delta_{\zeta_i}+(1-\varepsilon)\sum_{k=1}^\infty\rho_k\,\delta_{\xi_k}\in\cdot\bigg)\\
&\qquad\quad{}+\sum_{i,j\ge1:i\ne j}p_ip_j\textbf{P}\bigg(\varepsilon[U\delta_{\zeta_i}+(1-U)\delta_{\zeta_j}]+(1-\varepsilon)\sum_{k=1}^\infty\rho_k\,\delta_{\xi_k}\in\cdot\bigg)\\
&\qquad\quad{}+\sum_{i=1}^\infty p_ip_0\,\textbf{P}\bigg(\varepsilon[U\delta_{\zeta_i}+(1-U)\delta_{\zeta'}]+(1-\varepsilon)\sum_{k=1}^\infty\rho_k\,\delta_{\xi_k}\in\cdot\bigg)\\
&\qquad\quad{}+\sum_{j=1}^\infty p_0p_j\,\textbf{P}\bigg(\varepsilon[U\delta_{\zeta}+(1-U)\delta_{\zeta_j}]+(1-\varepsilon)\sum_{k=1}^\infty\rho_k\,\delta_{\xi_k}\in\cdot\bigg)\\
&\qquad\quad{}+p_0^2\,\textbf{P}\bigg(\varepsilon[U\delta_{\zeta}+(1-U)\delta_{\zeta'}]+(1-\varepsilon)\sum_{k=1}^\infty\rho_k\,\delta_{\xi_k}\in\cdot\bigg)\\
&\quad{}=\textbf{P}\bigg(\varepsilon\delta_{\zeta_1}+(1-\varepsilon)\sum_{i=1}^\infty\rho_i\,\delta_{\xi_i}\in\cdot\bigg)\,P_{(2)}(\bm p)\\
&\qquad\quad{}+\textbf{P}\bigg(\varepsilon[U\delta_{\zeta_1}+(1-U)\delta_{\zeta_2}]+(1-\varepsilon)\sum_{i=1}^\infty\rho_i\,\delta_{\xi_i}\in\cdot\bigg)\,P_{(1,1)}(\bm p),
\end{align*}
where $\varepsilon$ is beta$(2,\theta)$, $\zeta_1,\zeta_2,\ldots$ as well as $\zeta$, $\zeta'$, and $\xi_1,\xi_2,\ldots$ are i.i.d.\ $\nu_0$, $U$ is uniform$(0,1)$, $(\rho_1,\rho_2,\ldots)$ is $\text{PD}_\theta$, and all are independent.

And for the right side of \eqref{RP-cond_n}, we get, with $\nu\in\Phi^{-1}(\bm p)$,
\begin{align}\label{RHS_n}
&\int_{\overline{\nabla}_\infty}\textbf{P}\bigg(\sum_{i=1}^\infty q_i\,\delta_{\zeta_i}+q_0\,\nu_0\in\cdot\bigg)\nonumber\\
&\qquad{}\cdot\int_{S^n}\nu^n(dx_1\times\cdots\times dx_n)\,\textbf{P}\bigg(\varepsilon\sum_{i=1}^n U_i^{(n)}\delta_{x_i}+(1-\varepsilon)\sum_{i=1}^\infty\rho_i\,\delta_{\xi_i}\in\Phi^{-1}(d\bm q)\bigg)\nonumber\\
&\quad{}=\sum_{\bm n:|\bm n|=n}\int_{\overline{\nabla}_\infty}\textbf{P}\bigg(\sum_{i=1}^\infty q_i\,\delta_{\zeta_i}+q_0\,\nu_0\in\cdot\bigg)\nonumber\\
&\qquad\qquad\qquad\quad\cdot\textbf{P}(\rho(\varepsilon V_1^{\bm n},\ldots,\varepsilon V_l^{\bm n},(1-\varepsilon)\rho_1,(1-\varepsilon)\rho_2,\ldots)\in d\bm q)\,P_{\bm n}(\bm p)\nonumber\\
&\quad{}=\sum_{\bm n:|\bm n|=n}\textbf{P}\bigg(\varepsilon(V_1^{\bm n}\delta_{\zeta_1}+\cdots+V_l^{\bm n}\delta_{\zeta_l})+(1-\varepsilon)\sum_{i=1}^\infty\rho_i\,\delta_{\xi_i}\in\cdot\bigg)\,P_{\bm n}(\bm p).
\end{align}
So \eqref{LHS_n} and \eqref{RHS_n} are equal, proving \eqref{RP-cond_n} and confirming that \eqref{RP-cond-2} is satisfied.

\end{document}